\newcommand{\C}{{\mathbb C}}
\newcommand{\CC}{{\mathbb C}}
\newcommand{\Z}{{\mathbb Z}}
\newcommand{\ZZ}{{\mathbb Z}}
\newcommand{\R}{{\mathbb R }}
\newcommand{\RR}{{\mathbb R}}
\newcommand{\NN}{{\mathbb N}}
\newtheorem{Thm}{Theorem}[section]
\newtheorem{thm}{Theorem}[section]
\newtheorem{lem}[Thm]{Lemma}
\newtheorem{prop}[Thm]{Proposition}
\theoremstyle{definition}
\newtheorem{df}[Thm]{Definition}
\newtheorem{rem}[Thm]{Remark}
\newtheorem{ex}[Thm]{Example}
\newtheorem{prob}[Thm]{Problem}
\title[ Pseudo-normalized Hecke eigenform]{Pseudo-normalized Hecke eigenform and its application to extremal $2$-modular lattices}
\author{Tsuyoshi Miezaki*}
\thanks{*Corresponding author}
\address{		Faculty of Science and Engineering, 
		Waseda University, 
		Tokyo 169--8555, Japan
}
\email{miezaki@waseda.jp} 
\author{Gabriele Nebe}
\address{RWTH Aachen University, 52056 Aachen, Germany}
\email{nebe@math.rwth-aachen.de}
\date{}
\keywords{spherical $t$-designs, 
$2$-modular lattices, Venkov's theorem, 
spherical theta series.
}
\subjclass[2010]{Primary 11H06; Secondary 11F11; Tertiary 11H71}
\begin{document}

\begin{abstract}
	It is shown that 
extremal $2$-modular lattices of 
ranks $32$ and $48$ are
generated by their vectors of minimal norm. 
In the proof, 
we use certain properties of the difference of 
normalized Hecke eigenforms. 
We refer to them as the pseudo-normalized Hecke eigenform, 
the concept of which is introduced 
in this paper.
\end{abstract}
\maketitle





\section{Introduction}

A lattice in $\R^{n}$ is a subset $L \subset \R^{n}$ 
containing a basis 
$\{e_{1}, \ldots, e_{n}\}$ of $\R^{n}$ 
such that $L =\Z e_{1}\oplus \cdots \oplus\Z e_{n}$, 
i.e., $L $ consists of all integral linear combinations 
of the vectors $e_{1}, \ldots, e_{n}$. 
The dual lattice $L$ is defined as
\[
L^{\sharp}:=\{y\in \R^{n}\mid (y,x) \in\Z , \ \forall x\in L\}, 
\]
where $(x,y)$ is the standard inner product. 
Herein, we assume that the lattice $L $ is integral, 
i.e., $(x,y) \in\Z$ for all $x$, $y\in L$. 
An integral lattice $L$ is even if $(x,x) \in 2\Z$ for all $x\in L$. 
An integral lattice $L$ is unimodular if $L^{\sharp}=L$. 

The notion of $\ell$-modular lattices is introduced in \cite{Q}. 
An $n$-dimensional integral lattice $L$ is modular if a similarity $\sigma$ of $\RR^n$ exists such that 
\[
\sigma(L^\sharp)=L, 
\]
where $L^\sharp$ is the dual lattice of $L$. 
If $\sigma$ multiplies norms by $\ell$, it is regarded as $\ell$-modular. 
For example, the root lattices $E_8$, $D_4$, $A_2$ are 
$1$-,$2$-,$3$-modular, respectively.
The $1$-modular lattices are better known as unimodular lattices.

Let $L$ be an even $2$-modular lattice of rank $n$. 
Then $n$ 
is divisible by $4$ and \cite{{Q},{RS}} give 
the following bound on the minimum norm of a nonzero vector in
$L$:
\begin{align}\label{ine:lattice-ext-2}
\min(L)\leq 2\left\lfloor\frac{n}{16}\right\rfloor+2. 
\end{align}
A $2$-modular lattice $L$ that achieves equality in 
(\ref{ine:lattice-ext-2}) is called extremal.

Herein, 
we investigate the following problem: 
\begin{prob}\label{prob:main}
Let $L$ be a lattice and 
\begin{align*}
L_{\ell_1,\ldots,\ell_s}:=\{x\in L\mid (x,x)=\ell_1 \mbox{ or } \cdots \mbox{ or } (x,x)=\ell_s\}.
\end{align*}
Is $L$ generated by 
$L_{\ell_1,\ldots,\ell_s}$?
\end{prob}
Let us introduce the known results of Problem \ref{prob:main}.
\begin{enumerate}
\item 
If $L$ is an extremal even unimodular lattice of rank 
$32$ or $48$, 
then $L$ is generated by its vectors of minimal norm \cite{{O1},{O2}}. 

\item 
If $L$ is an extremal even unimodular lattice of rank 
$56$, $72$ or $96$, 
then $L$ is generated by its vectors of minimal norm \cite{{K}}. 

\item 
If $L$ is an extremal
even unimodular lattice of rank $40$, 
then $L$ is generated by its vectors of norms $4$ and $6$ \cite{O3}. 

\item 
If $L$ is an extremal
even unimodular lattice of rank $80$ (resp.~$120$), 
then $L$ is generated by its vectors of norms $8$ and $10$ 
(resp.~norms $12$ and $14$) \cite{KA}. 

\end{enumerate}

The main result of this paper is 
 the following theorem: 
\begin{thm}\label{thm:main}
\begin{enumerate}

\item 
Let $L$ be an extremal even $2$-modular lattice of rank $32$, then 
$L$ is generated by its vectors of minimal norm.

\item 
Let $L$ be an extremal even $2$-modular lattice of rank $48$, then 
$L$ is generated by its vectors of minimal norm.

\item 
Let $L$ be an extremal even $2$-modular lattice of rank $24$, then 
$L$ is generated by 
its vectors of norms $4$ and $6$. 

\item 
Let $L$ be an extremal even $2$-modular lattice of rank $36$, then 
$L$ is generated by 
its vectors of norms $6$ and $8$. 
\end{enumerate}

\end{thm}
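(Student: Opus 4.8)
The strategy is the one that goes back to Ozeki and Kominers and was axiomatized by Venkov: to show that an extremal lattice $L$ is generated by its vectors of prescribed norms $\ell_1,\dots,\ell_s$, one shows that the sublattice $M=L_{\ell_1,\dots,\ell_s}$ (the $\Z$-span of those vectors) cannot be a proper sublattice of finite index. Suppose for contradiction that $M\subsetneq L$. Then there is a prime $p$ and a nonzero vector $v\in L^\sharp/M^\sharp$-type coset, more precisely a nonzero class in $L/M$, and one localizes at $p$. The key input is that the vectors of norm $\ell_i$ form a spherical design of high strength: by Venkov's theorem the minimal vectors (and, for the mixed-norm cases, the union of the two relevant shells) of an extremal $2$-modular lattice carry a spherical $t$-design with $t$ large enough ($t\ge 3$, in the relevant cases $t\ge 5$ or $7$). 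Equivalently, for every harmonic polynomial $P$ of degree $\le t$ the sum $\sum_{x\in L,\ (x,x)=\ell_i}P(x)=0$. One then applies this with $P$ a harmonic polynomial attached to the hyperplane $v^\perp$, or rather to the reduction mod $p$, to force every minimal vector to lie in a fixed hyperplane or coset modulo $p$ — contradicting the fact that the minimal vectors of $L$ span $\R^n$.

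More concretely, the plan is: (i) recall the theta series with spherical coefficients $\vartheta_{L,P}(z)=\sum_{x\in L}P(x)q^{(x,x)/2}$, which for $P$ harmonic of degree $d$ is a cusp form of weight $n/2+d$ for the relevant Fricke group $\Gamma_0(2)+2$; (ii) for a hypothetical proper sublattice $M$ of index divisible by $p$, choose a suitable vector $y$ and the harmonic polynomial $P(x)=\big((x,y)^2-\tfrac{(y,y)}{n}(x,x)\big)$ of degree $2$ (or its higher-degree analogues), and consider the theta series of the relevant translate/sublattice; (iii) use extremality to pin down $\vartheta_{L,P}$ up to finitely many Fourier coefficients, and use Venkov's design property to kill the leading coefficients; (iv) derive a congruence modulo $p$ on the coefficients that is impossible. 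The role of the \emph{pseudo-normalized Hecke eigenform} introduced in this paper is exactly at step (iii): in ranks $32$ and $48$ (and $24$, $36$) the space of cusp forms of the relevant weight for $\Gamma_0(2)+2$ is not one-dimensional, so "extremal" does not immediately determine the spherical theta series; instead one must control the relevant coefficients of a specific combination of normalized Hecke eigenforms, and the arithmetic (integrality/congruence) properties of that combination — the pseudo-normalized eigenform — are what make the mod-$p$ argument go through.

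I would organize the four cases uniformly and treat rank $32$ first as the model case: here $n/2=16$, the minimal norm is $\min(L)=2\lfloor 32/16\rfloor+2=6$, and the shell of norm $6$ must be a $3$-design (in fact better) by Venkov-type bounds for $2$-modular lattices, so $\vartheta_{L,P}$ for harmonic $P$ of degree $2$ lies in a cusp space that extremality collapses to something controlled by the pseudo-normalized eigenform; one then shows the norm-$6$ vectors span. Rank $48$ is parallel with $n/2=24$ and $\min(L)=8$. For ranks $24$ and $36$ the single shell of minimal vectors need not span (indeed generically it does not: think of $D_4$-like behaviour), so one must use the union of the two lowest nonzero shells, which is where statements (3) and (4) differ in form; the design strength of that union and the two-dimensional cusp space force the use of the pseudo-normalized eigenform in essentially the same way.

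The main obstacle will be step (iii)–(iv): proving that the specific linear combination of Hecke eigenforms (the pseudo-normalized eigenform) has the integrality and $p$-adic valuation properties needed so that the design identities, combined with extremality, produce a genuine contradiction rather than a vacuous congruence. Unlike the classical unimodular cases where the relevant cusp space is one-dimensional and a single normalized eigenform does the job, here one must carefully track how the non-eigenform combination behaves under the Hecke operators at the relevant primes and verify that no "accidental" cancellation rescues the hypothetical proper sublattice. Everything else — writing down the harmonic polynomials, invoking Venkov's theorem for $2$-modular lattices, and the reduction to a finite check for small primes $p$ — is, modulo bookkeeping, routine once the properties of the pseudo-normalized Hecke eigenform are in hand.
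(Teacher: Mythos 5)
Your proposal is a plan rather than a proof, and its central mechanism does not work as stated. You propose to derive a contradiction by forcing the minimal vectors into a hyperplane or a fixed coset modulo $p$, ``contradicting the fact that the minimal vectors of $L$ span $\R^n$.'' But spanning $\R^n$ is far weaker than generating $L$ over $\Z$: a proper sublattice ${\mathcal L}(L_{m_0})$ of finite index still spans $\R^n$, so no contradiction is reached this way. Moreover, you explicitly defer the decisive steps (your (iii)--(iv)) --- the integrality/$p$-adic properties of the ``pseudo-normalized'' combination and the derivation of an impossible congruence --- and these are exactly the points where all the work lies; as written, nothing in the proposal pins down a contradiction for any of the four ranks. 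Some of the surrounding facts are also off: for rank $24$ (resp.\ $36$) the relevant shells are only $3$-designs (resp.\ $5$-designs), not ``$t\ge 5$ or $7$,'' and the issue in the paper is not that extremality fails to determine $\theta_{L,P}$ --- for the degrees used it does determine it up to a scalar as $c\,\Delta_{16}^{m+1}$ or $c\,\theta_{D_4}\Delta_{16}^{m+1}$.

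The paper's actual argument is the coset/configuration-number method of Venkov--Ozeki--Kominers, which your sketch gestures at but does not carry out. One takes a class $[x']\in L/{\mathcal L}(L_{m_0})$ (or $L/{\mathcal L}(L_{m_0,m_0+2})$) with a minimal-norm representative $x'$ of norm $s>m_0$, bounds $|(x,x')|$ for $x$ in the low shells, and uses the design property to write down moment equations for the counts $M_j$, $N_j$, $M'_j$ of vectors with $|(x,x')|=j$ in the shells of $L$ and of the rescaled dual $L'=\sqrt2\,L^{\sharp}$. The extra, and essential, equations come from comparing $\theta_{L,P}$ with $\theta_{L',P}$ via the $2$-modular structure (Lemma~\ref{lem:main-2}); the pseudo-normalized Hecke eigenform enters only to compute explicit Fourier coefficients of $\theta_{D_4}\Delta_{16}^{3}$ (e.g.\ $a(2^i)=0$, $a(2^i3)=2^{12i}$), not to run a mod-$p$ localization. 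The contradictions are then elementary: the unique solutions for the configuration numbers are negative, non-integral, of the wrong parity, or (in rank $24$) force $\sqrt2^{-1}L_4$ to be a $3024$-root system in dimension $24$, which is impossible. Without this dual-lattice comparison and the explicit finite computations, your outline cannot close.
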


\begin{rem}
Let $L$ be an extremal 
even $2$-modular lattice of rank $n$ with $4\leq n\leq 20$. 
Then 
for $n\neq 12$, we can show that $L$ is generated 
by its vectors of minimal norm and 
for $n= 12$, we can show that $L$ is generated 
by its vectors of norms $2$ and $4$ 
with the same arguments of Theorem \ref{thm:main}. 
However, we omit to describe it here because 
these can be obtained by direct computations as all extremal even 
$2$-modular lattices in dimensions up to 20 are known 
\cite{{SV},{BV}}. 
\end{rem}

For the proof of Theorem \ref{thm:main}, 
we will introduce the notion of a pseudo-normalized Hecke eigenform.

This paper is organized as follows. In Section \ref{sec:pre}, 
we provide the definitions and basic properties of $2$-modular lattices, as well as the
spherical $t$-designs used in this study. 
In Section \ref{sec:proof}, 
the proof of Theorem \ref{thm:main} is provided, along with concluding remarks.

All computer calculations in this paper were done with the help of 
{\sc Magma} \cite{Magma} and {\sc Mathematica} \cite{Mathematica}. 

\section{Preliminaries}\label{sec:pre}

\subsection{Spherical $t$-designs}

The concept of a spherical $t$-design originated from Delsarte, Goethals, and Seidel 
\cite{DGS}. For a positive integer $t$, a finite nonempty set X in the unit sphere
\[
S^{n-1} = \{x = (x_1, \ldots , x_n) \in \R ^{n}\ |\ x_1^{2}+ \cdots + x_n^{2} = 1\}
\]
is known as a spherical $t$-design in $S^{n-1}$ if the following condition is satisfied:
\[
\frac{1}{|X|}\sum_{x\in X}f(x)=\frac{1}{|S^{n-1}|}\int_{S^{n-1}}f(x)d\sigma (x), 
\]
for all polynomials $f(x) = f(x_1, \ldots ,x_n)$ of degree not exceeding $t$. 
A finite subset $X$ in $S^{n-1}(r)$ for a sphere of radius $r$ 
centered at the origin 
is also called a spherical $t$-design if the appropriately rescaled 
set $(1/r)X$ is 
a spherical $t$-design on the unit sphere $S^{n-1}$.
Hence, we say that $L_\ell$ is a spherical $t$-design 
if $(1/\sqrt{\ell})L_\ell$ is a spherical $t$-design. 

Let $L$ be an extremal even $2$-modular lattice of dimension $n$, and 
let us set 
\[
L_\ell:=\{x\in L\mid (x,x)=\ell\}. 
\]
If the set is non-empty then 
$L_\ell$
forms a spherical $t$-design (\cite[Corollary 3.1]{BV}), where
\[
t=
\left\{
\begin{array}{ll}
7\ &{\rm if}\ n \equiv 0 \pmod{16},\\
5\ &{\rm if}\ n \equiv 4 \pmod{16},\\
3\ &{\rm if}\ n \equiv 8 \pmod{16}. 
\end{array}
\right.
\]

Let ${\rm {\rm Harm}}_{j}(\R^{n})$ denote the set of homogeneous 
harmonic polynomials of degree $j$ on $\R^{n}$. It is well known that $X$ is a spherical $t$-design if and only if the condition 
\[
\sum_{x\in X}P(x)=0 
\]
holds for all $P \in {\rm Harm}_j(\R^n)$ with $1 \leq  j \leq  t$. 
If the set $X$ is antipodal, i.e., $-X=X$, and $j$ is odd, then the aforementioned condition is fulfilled automatically. Hence, we can reformulate the condition of spherical $t$-design on an antipodal set as follows:
\begin{prop}\label{thm:design-lattice}
A nonempty finite antipodal subset $X\subset S^{n-1}_{m}$ is a spherical $2s+1$-design if the condition 
\[
\sum_{x\in X}P(x)=0 
\]
holds for all $P \in {\rm Harm}_{2j}(\R^n)$ with $2 \leq  2j \leq  2s$. 
\end{prop}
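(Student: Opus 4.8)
The plan is to reduce the claim directly to the harmonic-polynomial characterization of spherical designs recalled just before the proposition. By that characterization, a nonempty finite set $X\subset S^{n-1}_m$ is a spherical $(2s+1)$-design precisely when $\sum_{x\in X}P(x)=0$ for every $P\in\Harm_j(\R^n)$ with $1\le j\le 2s+1$. So I would split the range $1\le j\le 2s+1$ according to the parity of $j$ and handle the two cases separately.

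For odd $j$, I would simply invoke antipodality. Since $-X=X$ and any $P\in\Harm_j(\R^n)$ is homogeneous of odd degree $j$, we have $P(-x)=(-1)^jP(x)=-P(x)$; reindexing the sum via $x\mapsto -x$ then gives $\sum_{x\in X}P(x)=\sum_{x\in X}P(-x)=-\sum_{x\in X}P(x)$, which forces $\sum_{x\in X}P(x)=0$. This is exactly the remark preceding the proposition, so no further work is required here.

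For even $j$, write $j=2j'$. As $j$ ranges over the even integers in $[1,2s+1]$, the index $j'$ ranges over $1,2,\ldots,s$, equivalently $2\le 2j'\le 2s$. This is precisely the set of degrees for which the hypothesis of the proposition supplies $\sum_{x\in X}P(x)=0$. Combining the odd and even cases covers all $j$ with $1\le j\le 2s+1$, and the characterization then gives that $X$ is a spherical $(2s+1)$-design, completing the argument.

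There is essentially no serious obstacle here; the proposition is a routine reformulation. The only point to be careful about is the parity bookkeeping: one must check that the even degrees $\le 2s+1$ match exactly the hypothesis range $2\le 2j\le 2s$, and that the top (odd) degree $2s+1$ is genuinely taken care of by antipodality rather than by an additional hypothesis.
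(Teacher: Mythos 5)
Your proposal is correct and matches the paper's reasoning exactly: the paper derives the proposition from the harmonic characterization of $t$-designs together with the observation that antipodality kills all odd-degree harmonic sums, which is precisely your parity split. Nothing further is needed.
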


\subsection{Spherical theta series}                                   
Let $\mathbb{H} :=\{z\in\C\mid {\rm Im}(z) >0\}$ be the upper half-plane. 
\begin{df}
Let $L$ be the lattice of $\R^{n}$. Then, for a polynomial $P$, the function 
\[
\theta _{L, P} (z):=\sum_{x\in L}P(x)e^{i\pi z(x,x)}
\]
is known as the theta series of $L $ weighted by $P$. 
\end{df}

\begin{rem}
The weighted theta series can be written as 
\begin{equation}\label{weighted_th}
\theta _{L, P} (z)
=\theta _{L, P} (q)
=\sum_{m\geq 0}a^{(P)}_{m}q^{m}, 
\end{equation}
where $a^{(P)}_{m}:=\sum_{x\in L_{m}}P(x)$ and $q=e^{\pi i z}$. 
\end{rem}

For example, we consider an even $2$-modular lattice $L$. 
Subsequently, the weighted
theta series $\theta_{L,P}$ of $L$ weighted 
by a harmonic polynomial $P$ is of a modular 
form with respect to $\Gamma$. 
In general, we have the following:
\begin{prop}[\cite{{Q},{BV}}]\label{prop:BV}
Let $L$ be an even $2$-modular lattice $L$ of rank $n$ and 
$L':=\sqrt{\ell}L$. 
Then, for $P\in {\rm Harm}_{4k}(\RR^{n})$, we have
\[
\begin{cases}
\theta_{L, P}(z)+\theta_{L', P}(z)&\in \CC[\theta_{D_4}(q),\Delta_{16}(q)],\\ 
\theta_{L, P}(z)-\theta_{L', P}(z)&\in \Phi_{24}(q)\CC[\theta_{D_4}(q),\Delta_{16}(q)], 
\end{cases}
\]
for $P\in {\rm Harm}_{4k+2}(\RR^{n})$, 
\[
\begin{cases}
\theta_{L, P}(z)+\theta_{L', P}(z)&\in \Phi_{24}(q)\CC[\theta_{D_4}(q),\Delta_{16}(q)],\\ 
\theta_{L, P}(z)-\theta_{L', P}(z)&\in \CC[\theta_{D_4}(q),\Delta_{16}(q)], 
\end{cases}
\]
where 
\[
\Delta_{16}(q)=(\eta(q)\eta(2q))^8=q^2+\cdots, 
\]
$\eta(q)$ is the Dedekind $\eta$-function, and 
$\Phi_{24}(q)=q^2+\cdots$ is a modular form of weight $12$. 
For more details, see \cite{{Q},{BV}}. 
\end{prop}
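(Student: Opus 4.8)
The plan is to read off how $\theta_{L,P}$ and $\theta_{L',P}$ behave under the Fricke involution $W_2\colon z\mapsto -1/(2z)$ on modular forms for $\Gamma_0(2)$, and then to split the pair into $W_2$-eigencomponents; this follows the line of \cite{Q,BV}.

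\emph{The transformation-law step.} By Hecke's identity the Fourier transform of $P(x)\,e^{-\pi(x,x)}$ equals $i^{-d}P(x)\,e^{-\pi(x,x)}$ for $P\in\Harm_d(\R^n)$, so Poisson summation over $L$ gives
\[
\theta_{L,P}\!\left(-\tfrac1z\right)=\frac{i^{d}}{\sqrt{\det L}}\Bigl(\tfrac{z}{i}\Bigr)^{n/2+d}\theta_{L^{\sharp},P}(z).
\]
Since $L$ is even, $\theta_{L,P}(z+1)=\theta_{L,P}(z)$; together with $\det L=2^{n/2}$ this shows $\theta_{L,P}\in M_k(\Gamma_0(2))$ with trivial character, where $k:=n/2+d$. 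Substituting $z\mapsto 2z$ and rewriting the right-hand side in terms of $\theta_{L',P}$ (this is where the $2$-modularity of $L$, i.e.\ the relation between $L$ and $L'$, enters) one obtains
\[
\theta_{L,P}\,|_{k}\,W_2=(-1)^{n/4}\,\theta_{L',P},\qquad \theta_{L',P}\,|_{k}\,W_2=(-1)^{n/4}\,\theta_{L,P},
\]
the second relation following from the first because $W_2^{2}$ acts trivially on forms of even weight and $k$ is even ($4\mid n$ and $d$ is even). Hence $\theta_{L,P}\pm\theta_{L',P}$ are $W_2$-eigenforms with eigenvalues $\pm(-1)^{n/4}$, and $(-1)^{n/4}=(-1)^{k/2}$ or $-(-1)^{k/2}$ according as $d\equiv 0$ or $2\pmod 4$ — this parity flip is exactly the interchange of the two cases in the statement. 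Finally, for $d\ge1$ the constant term of $\theta_{L,P}$ is $P(\0)=0$, and the $W_2$-relation forces vanishing at the other cusp of $\Gamma_0(2)$ too, so $\theta_{L,P},\theta_{L',P}$ and with them their sum and difference are cusp forms.

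\emph{The ring-structure step.} One has $M_*(\Gamma_0(2))=\C[\theta_{D_4},\psi_4]$, free on a weight-$2$ generator $\theta_{D_4}$ and a weight-$4$ generator $\psi_4$, both of which may be taken $W_2$-anti-invariant, so $\theta_{D_4}^{a}\psi_4^{b}$ has $W_2$-eigenvalue $(-1)^{a+b}$ in weight $2a+4b$. The weight-$k$ forms with $W_2$-eigenvalue $(-1)^{k/2}$ therefore make up a subring of Hilbert series $\tfrac1{(1-t^2)(1-t^8)}$; as it contains $\C[\theta_{D_4},\Delta_{16}]$, which has the same Hilbert series ($\Delta_{16}=(\eta(q)\eta(2q))^{8}$ spanning the one-dimensional weight-$8$ cusp space), the two coincide. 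Moreover $S_*(\Gamma_0(2))=\Delta_{16}\,M_*(\Gamma_0(2))$, so the $W_2$-anti-invariant weight-$12$ cusp form $\Phi_{24}=q^{2}+\cdots$ — a scalar multiple of $\psi_4\Delta_{16}$ — generates over $\C[\theta_{D_4},\Delta_{16}]$ precisely the cusp forms whose $W_2$-eigenvalue in weight $k$ is $-(-1)^{k/2}$.

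Combining the two: for $P\in\Harm_{4k}(\R^n)$ (with $k\ge1$), $\theta_{L,P}+\theta_{L',P}\in M_{n/2+4k}(\Gamma_0(2))$ has $W_2$-eigenvalue $(-1)^{(n/2+4k)/2}$, hence lies in $\C[\theta_{D_4},\Delta_{16}]$, while $\theta_{L,P}-\theta_{L',P}$ is a cusp form of the opposite eigenvalue, hence lies in $\Phi_{24}\,\C[\theta_{D_4},\Delta_{16}]$; for $P\in\Harm_{4k+2}(\R^n)$ the parity above swaps the two conclusions, which is exactly the assertion. I expect the hard part to be the ring-structure step — identifying $M_*(\Gamma_0(2))$, its splitting under the Fricke involution (note that $\theta_{D_4}$ is itself $W_2$-anti-invariant, not invariant), and the role of $\Phi_{24}$ as generator of the anti-invariant cuspidal part — whereas the theta-transformation input is classical; all of this is carried out in \cite{Q,BV}.
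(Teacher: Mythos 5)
The paper does not actually prove this proposition—it is quoted from Quebbemann and Bachoc--Venkov—and your argument is precisely the standard one from those sources: Poisson summation with Hecke's identity giving $\theta_{L,P}\,|_k\,W_2=(-1)^{n/4}\theta_{L',P}$ (with $L'=\sqrt{2}L^{\sharp}$, which is how the statement's $L'$ must be read), splitting the sum and difference into Fricke eigencomponents, cuspidality for $d\geq 1$ at both cusps, and identifying the $(-1)^{k/2}$-eigenspace with $\C[\theta_{D_4},\Delta_{16}]$ and its cuspidal opposite with $\Phi_{24}\,\C[\theta_{D_4},\Delta_{16}]$. Your proposal is correct and takes essentially the same route as the cited references.
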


Using Proposition \ref{prop:BV}, 
for $P\in {\rm Harm}_{8}(\RR^{16m})$ and $P\in {\rm Harm}_{10}(\RR^{16m})$, 
$\theta_{L, P}(z)$ can be written explicitly as follows: 
\begin{lem}\label{lem:main-2}
\begin{enumerate}
\item [{\rm (1)}]
Let 
\[
d(r)=
\begin{cases}
8\ (\mbox{if }r=0),\\
6\ (\mbox{if }r=4),\\
4\ (\mbox{if }r=8).
\end{cases}
\]
Let $L$ be an extremal even $2$-modular 
lattice of rank $16m+r\ (r=0,4,8)$ and 
$P\in {\rm Harm}_{d(r)}(\RR^{16m+r})$. 
Subsequently, we have 
\[
\begin{cases}
\theta_{L',P} = \theta_{L, P}(z)=c_1\Delta_{16}(q)^{m+1}
=c_1(q^{2m+2}-{8(m+1)}q^{{2m+4}}+\cdots) \mbox{ if } r=0,8, \\
-\theta_{L',P} = \theta_{L, P}(z)=c_1\Delta_{16}(q)^{m+1}
=c_1(q^{2m+2}-{8(m+1)}q^{{2m+4}}+\cdots) \mbox{ if } r=4. 
\end{cases}
\]
\item [{\rm (2)}] 
Let $L$ be an extremal even $2$-modular 
lattice of rank $16m+r\ (r=0,4,8)$ and 
$P\in {\rm Harm}_{d(r)+2}(\RR^{16m+r})$. 
Subsequently, we have 
\[
\begin{cases}
-\theta_{L',P} = \theta_{L,P}=c_2\theta_{D_4}(z)\Delta_{16}(z)^{m+1}
=c_2(q^{2m+2}-{8(m-2)}q^{{2m+4}}+\cdots) \mbox{ if } r=0,8, \\
\theta_{L',P} = \theta_{L,P}=c_2\theta_{D_4}(z)\Delta_{16}(z)^{m+1}
=c_2(q^{2m+2}-{8(m-2)}q^{{2m+4}}+\cdots) \mbox{ if } r=4.
\end{cases}
\]
Moreover, for $m=2$ we have 
\begin{align*}
\theta_{L,P}&=c_2\theta_{D_4}(z)\Delta_{16}(z)^{3}
=c_2(q^{6} - 324 q^{10} + 4096 q^{12}+\cdots)\\
&=c_2\sum_{m=1}^{\infty}a(m)q^m\ (\mbox{say}) 
\end{align*}
and  
\begin{enumerate}
\item [{\rm (i)}]
$a(2^i)=0$ for all $i\in \NN$, 
\item [{\rm (ii)}]
$a(2^i3)=2^{12i}$ for all $i\in \NN$. 
\end{enumerate}
\end{enumerate}
\end{lem}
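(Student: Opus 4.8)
The plan is to reduce everything to the two explicit modular forms named in Proposition \ref{prop:BV} and Proposition \ref{prop:BV}'s refinement, and then to identify $\theta_{L,P}$ by its leading terms. For part (1), fix $r\in\{0,4,8\}$, write $m$ for the parameter so that $n=16m+r$, and put $P\in\mathrm{Harm}_{d(r)}(\RR^{n})$. By Proposition \ref{prop:BV}, the relevant sign combination of $\theta_{L,P}$ and $\theta_{L',P}$ lies in $\CC[\theta_{D_4}(q),\Delta_{16}(q)]$ while the other combination lies in $\Phi_{24}(q)\CC[\theta_{D_4}(q),\Delta_{16}(q)]$; here the degree $d(r)$ is chosen precisely so that $P$ sits in the $\mathrm{Harm}_{4k}$ or $\mathrm{Harm}_{4k+2}$ case that forces the $+$ (resp. $-$) combination into the cusp-form-free ring. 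The weighted theta series of a weight-$j$ harmonic polynomial is a cusp form of weight $n/2+j$ (its constant term vanishes since $P$ is harmonic of positive degree, so $a^{(P)}_0=0$), and for an extremal lattice the vanishing of $a^{(P)}_m$ for $m$ below the extremal bound forces the series to be a scalar multiple of the unique normalized element of the relevant one-dimensional space of cusp forms; since $\theta_{D_4}(q)=1+\cdots$ has constant term $1$ and $\Delta_{16}(q)=q^2+\cdots$, that normalized generator is $\Delta_{16}(q)^{m+1}$. Comparing the two sign combinations then shows the appropriate combination (the one living in the ring without the extra $\Phi_{24}$ factor) must vanish identically, which yields $\theta_{L',P}=\pm\theta_{L,P}$ with the sign depending on whether we are in the $\mathrm{Harm}_{4k}$ or $\mathrm{Harm}_{4k+2}$ regime, i.e.\ on the parity of $k$, equivalently on whether $r=0,8$ or $r=4$. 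The stated $q$-expansion $c_1(q^{2m+2}-8(m+1)q^{2m+4}+\cdots)$ is then just the binomial expansion of $\Delta_{16}(q)^{m+1}=(q^2(1-8q^2+\cdots))^{m+1}$, using that $\eta(q)\eta(2q)=q^{1/4}\prod(1-q^k)(1-q^{2k})\cdot q^{\cdot}$ has the leading behaviour $\Delta_{16}=q^2-8q^4+\cdots$.

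For part (2) the argument is identical but one degree higher: $P\in\mathrm{Harm}_{d(r)+2}(\RR^{n})$ now lands in the complementary case of Proposition \ref{prop:BV}, so the sign combination that was cusp-form-free before now carries the factor $\Phi_{24}(q)$ and vice versa. The relevant space is again one-dimensional after imposing the extremal vanishing conditions, and its normalized generator is $\theta_{D_4}(z)\Delta_{16}(z)^{m+1}$ (one checks $\theta_{D_4}$ contributes the weight $2$ needed to raise the weight by $2$, and it is the natural choice since $\theta_{D_4}(q)=1+24q^2+\cdots$ preserves a $q^{2m+2}$ leading term). The sign in $\theta_{L',P}=\pm\theta_{L,P}$ flips relative to part (1) because we have moved between the two rows of each case in Proposition \ref{prop:BV}. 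The leading coefficients $q^{2m+2}-8(m-2)q^{2m+4}+\cdots$ come from multiplying out $(1+24q^2+\cdots)(q^2-8q^4+\cdots)^{m+1}$: the $q^{2m+4}$ coefficient is $-8(m+1)+24=-8(m-2)$.

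For the $m=2$ specialization in part (2)(i)–(ii), set $f(q)=\theta_{D_4}(z)\Delta_{16}(z)^3=\sum a(m)q^m$; the coefficients $q^6-324q^{10}+4096q^{12}+\cdots$ are a finite computation. The multiplicativity facts $a(2^i)=0$ and $a(2^i 3)=2^{12i}$ should follow by expressing $f$ in terms of Hecke eigenforms: the space of cusp forms of the relevant weight and level for $\Gamma_0(2)$ is spanned by normalized Hecke eigenforms, and $f$ decomposes as a combination of these. One computes the eigenvalue $\lambda_2$ on each eigenform (here the factor $2^{12}$ signals weight $14$, so $\lambda_2=\pm 2^{13}$ or a related value dictated by the level-$2$ Atkin–Lehner theory), and then $a(2^i)$ and $a(2^i 3)$ are governed by the recursion $a(2^{i+1})=\lambda_2 a(2^i)-2^{13}a(2^{i-1})$ for the eigenform pieces, or—more simply—by the pseudo-normalized Hecke eigenform viewpoint announced in the introduction, in which $f$ is arranged to be a difference of two normalized Hecke eigenforms whose $2$-power and $2^i3$ coefficients can be read off directly. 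The fact $a(2^i)=0$ for all $i$, in particular $a(2)=a(4)=0$, is consistent with $\Delta_{16}$ being supported on even powers of $q$ with a $q^2$ gap structure that kills the relevant terms; I expect the cleanest route is to verify $a(2)=0$, $a(4)=0$, and $a(6)=1$, $a(12)=2^{12}$, $a(24)=2^{24}$ by direct expansion, then invoke the Hecke recursion to propagate.

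The main obstacle will be part (2)(ii) — identifying the exact eigenvalue structure so that the clean closed forms $a(2^i)=0$ and $a(2^i3)=2^{12i}$ fall out of the Hecke recursion rather than from ad hoc coefficient chasing; this is precisely where the "pseudo-normalized Hecke eigenform" formalism promised in the introduction is designed to do the work, and the proof will lean on writing $f$ as a suitable $\CC$-linear combination (or difference) of genuine normalized Hecke eigenforms for $\Gamma_0(2)$ and tracking their $T_2$-eigenvalues and the Euler factor at $2$.
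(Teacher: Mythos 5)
Your treatment of the general statements in (1) and (2) follows essentially the paper's route (Proposition \ref{prop:BV} plus extremality), but one assertion is backwards and one justification is missing. You claim the combination ``living in the ring without the extra $\Phi_{24}$ factor'' vanishes identically; in fact it is the combination lying in $\Phi_{24}\CC[\theta_{D_4},\Delta_{16}]$ that must vanish. The reason (which you never state) is an order-of-vanishing count: since $L'=\sqrt{2}L^{\sharp}$ is isometric to $L$, both $\theta_{L,P}$ and $\theta_{L',P}$, hence both sign combinations, vanish at $q=0$ to order $2m+2$, whereas a nonzero element of $\Phi_{24}\CC[\theta_{D_4},\Delta_{16}]$ of weight $8m+8$ (resp.\ $8m+10$) vanishes to order at most $2m$; the combination in $\CC[\theta_{D_4},\Delta_{16}]$ is then forced to be a scalar multiple of the unique monomial $\Delta_{16}^{m+1}$ (resp.\ $\theta_{D_4}\Delta_{16}^{m+1}$) of that weight with a $q^{2m+2}$ leading term. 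With this correction (and noting the sign is governed by the degree of $P$ modulo $4$, i.e.\ by $r$, not by ``the parity of $k$''), your derivation of the identities and of the coefficients $-8(m+1)$ and $-8(m-2)$ matches the paper's.

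The genuine gap is in (2)(i)--(ii) for $m=2$, which is the part the lemma is actually needed for. The form $\theta_{D_4}\Delta_{16}^3$ has weight $26$, not $14$, so your eigenvalue bookkeeping is off: the relevant eigenvalue at $2$ is $2^{12}$, and since $2$ divides the level the recursion at $2$ is the degenerate one $c(2^{\alpha+1})=c(2)c(2^{\alpha})$, with no correction term $-2^{13}a(2^{i-1})$ (nor $-2^{25}a(2^{i-1})$). More seriously, your fallback plan ``verify $a(2),a(4),a(6),a(12),a(24)$ by direct expansion and then invoke the Hecke recursion to propagate'' fails as stated, because $\theta_{D_4}\Delta_{16}^3$ is not a Hecke eigenform and the recursions apply only to its eigencomponents. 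The substantive content of the paper's proof is exactly the decomposition you defer as the ``main obstacle'': the space of weight-$26$ cusp forms for $\Gamma_0(2)$ is five-dimensional, and a computation exhibits two normalized Hecke eigenforms $h_1,h_2$ (Galois conjugates with coefficients involving $\sqrt{106705}$, both with $c_{h_i}(2)=4096$) such that $\theta_{D_4}\Delta_{16}^3=(h_2-h_1)/(9600\sqrt{106705})$. Then (i) follows because $c_{h_i}(2^j)=4096^{\,j}$ for both $i$, so the difference cancels, and (ii) follows from multiplicativity, $c_{h_i}(2^j3)=4096^{\,j}c_{h_i}(3)$, together with $c_{h_2}(3)-c_{h_1}(3)=9600\sqrt{106705}$. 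Until you produce this explicit identification of the eigencomponents (or an equivalent argument), statements (i) and (ii) remain unproved in your write-up, as you yourself acknowledge.
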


\begin{proof}
(1)
We prove the statement for $n=16m$ case only, 
the other cases can be proved in the same way. 
Let $L$ be an extremal even $2$-modular lattice of rank $16m$ and 
$P\in {\rm Harm}_{8}(\RR^{16m})$. 
Therefore, $\theta_{L, P}(z)$ and $\theta_{L', P}(z)$ 
are modular forms of weight $8m+8$. 
We remark that 
by the extremality, the leading term is $c q^{2m+2}+\cdots$ 
for some constant $c\in \RR$. 
By Proposition \ref{prop:BV}, for some constant $c\in \RR$, we have
\begin{align*}
\theta_{L, P}(z)+\theta_{L', P}(z)&=c\Delta_{16}(q)^{m+1},\\ 
\theta_{L, P}(z)-\theta_{L', P}(z)&=0.
\end{align*}
Subsequently,  
\[
	\theta _{L',P} = \theta_{L, P}(z)=c_1\Delta_{16}(q)^{m+1}.
\]
(2)
We prove the statement for $n=16m$ case only, 
the other cases can be proved in the same way. 
Let $L$ be an extremal even $2$-modular lattice of rank $16m$ and 
$P\in {\rm Harm}_{10}(\RR^{16m})$. 
Therefore, $\theta_{L, P}(z)$ and $\theta_{L', P}(z)$ 
are modular forms of weight $8m+10$. 
We remark that 
by the extremality, the leading term is $c q^{2m+2}+\cdots$ 
for some constant $c\in \RR$. 
By Proposition \ref{prop:BV}, for some constant $c\in \RR$, we have
\begin{align*}
\theta_{L, P}(z)+\theta_{L', P}(z)&=0,\\ 
\theta_{L, P}(z)-\theta_{L', P}(z)&=c\theta_{D_4}\Delta_{16}(q)^{m+1}.
\end{align*}
Subsequently, 
\[
\theta_{L, P}(z)=c_2\theta_{D_4}\Delta_{16}(q)^{m+1}. 
\]

We remark that $\theta_{D_4},\Delta_{16}(q)$ are 
modular forms for $\Gamma_0(2)$ \cite{Kob}. 
The dimension of the space of cusp forms of weight $26$ 
for $\Gamma_0(2)$ is five; using {\sc Magma} \cite{Magma},
we obtain the basis as follows: 
\begin{align*}
f_1&=q^2 + 2657760q^{12} - 21963256q^{14} + 1015627776q^{16} - 8615579463q^{18} +\cdots,\\
f_2&=  q^4 - 252252q^{12} - 1032192q^{14} - 42991616q^{16} - 54853632q^{18} -
\cdots,\\
f_3&=  q^6 + 19648q^{12} + 256770q^{14} + 2654208q^{16} + 16097088q^{18} +\cdots,\\
f_4&=  q^8 - 1176q^{12} - 21504q^{14} - 196656q^{16} - 1142784q^{18} -\cdots,\\
f_5&=  q^{10} + 48q^{12} + 852q^{14} + 8192q^{16} + 48510q^{18} +\cdots.
\end{align*}
Subsequently, the following are normalized Hecke eigenforms: 
\begin{align*}
h_1&=f_1 + 4096f_2 + 12 \left(15827+400 \sqrt{106705}\right)f_3 \\
&+ 16777216f_4 + 150 \left(2473177-10368 \sqrt{106705}\right)f_5\\
&=q^2+4096 q^4+12 \left(15827-400 \sqrt{106705}\right) q^6+16777216 q^8\\
&+150 \left(2473177+10368 \sqrt{106705}\right) q^{10}+\cdots,\\
h_2&=f_1 + 4096f_2 + 12 \left(15827-400 \sqrt{106705}\right)f_3 \\
&+ 16777216f_4 + 150 \left(2473177+10368 \sqrt{106705}\right)f_5\\
&=q^2+4096 q^4+12 \left(15827+400 \sqrt{106705}\right) q^6+16777216 q^8\\
&+150 \left(2473177-10368 \sqrt{106705}\right) q^{10}+\cdots.  
\end{align*}
By comparing the Fourier coefficients, 
we have 
\[
\theta_{D_4}(z)\Delta_{16}(z)^3=\frac{h_2-h_1}{9600\sqrt{106705}}. 
\]
For $j=1,2$, we denote by $c_{h_j}(n)$ the coefficient of $h_j$ as follows:
\[
h_j =\sum_{n=1}^{\infty}c_{h_j}(n)q^n.
\]
Because $h_1$ and $h_2$ are normalized Hecke eigenforms, 
the Fourier coefficients of $h_j$ satisfy the following equations{\rm :}
\begin{align}
c_{h_j}(mn)&=c_{h_j}(m)c_{h_j}(n)\ (m, n \ {\rm coprime}) \label{eqn:mul},\\
c_{h_j}(p^{\alpha +1})&=c_{h_j}(p)c_{h_j}(p^{\alpha })-p^{k-1}c_{h_j}(p^{\alpha -1}) \ (p \mbox{ is a prime with }p\neq 2) \label{eqn:rec}, \\
c_{h_j}(2^{\alpha +1})&=c_{h_j}(2)c_{h_j}(2^{\alpha })\label{eqn:rec2}. 
\end{align}
Because $c_{h_1}(2)=c_{h_2}(2)=4096$ and by applying (\ref{eqn:rec}), 
we obtain (i). 

Using (\ref{eqn:rec2}), we obtain 
\[
c_{h_j}(2^{i})=c_{h_j}(2)c_{h_j}(2^{i-1})=c_{h_j}(2)^i=4096^i. 
\]
Using (\ref{eqn:mul}), for $j=1,2$, $c_{h_j}(2^i3)=c_{h_j}(2^i)c_{h_j}(3)$. 
Subsequently, 
\begin{align*}
c_{h_2}(2^i3)-c_{h_1}(2^i3)
&=c_{h_2}(2^i)c_{h_2}(3)-c_{h_1}(2^i)c_{h_1}(3)\\
&=c_{h_2}(2^i)(9600 \sqrt{106705})
\end{align*}
Namely, 
\begin{align*}
a(2^i3)=4096^i=2^{12i}. 
\end{align*}
The proof of (2) is completed. 
\end{proof}

Next, we define the concept of a pseudo-normalized Hecke eigenform. 
\begin{df}
Let $f$ be a pseudo-normalized Hecke eigenform of weight $k$ for some group $\Gamma$
if 
\[
f=g_1-g_2,
\]
where
$g_1$ and $g_2$ are normalized Hecke eigenforms of weight $k$ 
for $\Gamma$. 
\end{df}
\begin{ex}
By Lemma \ref{lem:main-2}, 
$\theta_{D_4}\Delta_{16}^3$ is a pseudo-normalized Hecke eigenform.
\end{ex}

\section{Proof of Theorem \ref{thm:main}}\label{sec:proof}
Let $\mathcal{L}(L_{m_1,\ldots,m_k})$ be the lattice generated by 
$L_{m_1},\ldots, L_{m_k}$. Let $m_0:=\min(L)$.  
For the proof, 
suppose an equivalence class $[x']\in L/\mathcal{L}(L_{m_0})$ exists
such that $x'$ is a minimal-norm representative with norm $(x',x')=s>m_0$. 
For $j \in \ZZ$, we write
\begin{align*}
	M_j:=M_j(L;x') &:=|\{
x\in L_{\min(L)}\mid |(x,x')| =j
\}|,\\
	M'_j:=M'_j(L;x') &:=|\{
		x\in L^{\sharp }_{\min(L^{\sharp })}\mid |(x,x')| =j
\}|,\\
N_j:=N_j(L;x') &:=|\{
x\in L_{\min(L)+2}\mid |(x,x')|=j
\}|.
\end{align*}
Then we have the following results: 

\begin{lem}[\cite{{Venkov2},{KA}}]\label{lem:main-3}
\begin{enumerate}
\item [{\rm (1)}]
For all $x \in L_{m_0}$, we have the inequality
\[
|(x,x')|\leq \frac{m_0}{2}. 
\]
\item [{\rm (2)}]
For all $x \in L_{m_0+2}$, we have the inequality
\[
|(x,x')|\leq \frac{m_0}{2}+1. 
\]
\end{enumerate}
\end{lem}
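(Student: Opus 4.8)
The plan is to prove Lemma~\ref{lem:main-3} by exploiting that $L_{m_0}$ and $L_{m_0+2}$ are spherical $t$-designs (for suitable $t\geq 4$) together with the geometry forced by the existence of the vector $x'$. The underlying principle, going back to Venkov, is that in an extremal lattice the minimal vectors are so numerous and so evenly distributed over the sphere that no single lattice vector $x'$ can have a large inner product with one of them without creating a contradiction.

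First I would fix $x\in L_{m_0}$ and consider the vector $x'-x\in L$. Since $x'$ has minimal norm in its coset modulo $\mathcal{L}(L_{m_0})$ and $x'-x$ lies in the same coset, we get $(x'-x,x'-x)\geq (x',x')=s$, which expands to $2(x,x')\leq (x,x)=m_0$; applying the same argument to $x'+x$ (also in the coset) gives $-2(x,x')\leq m_0$, hence $|(x,x')|\leq m_0/2$. This proves part~(1) and uses only the minimality of $x'$ in its coset, not the design property. For part~(2), the analogous naive bound from $x'\pm x$ with $x\in L_{m_0+2}$ only yields $|(x,x')|\leq m_0/2+1$ directly, since $(x'-x,x'-x)\geq s$ now gives $2(x,x')\leq (x,x)=m_0+2$. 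So in fact both parts follow from the same elementary coset-minimality argument.

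The one subtlety I would want to check carefully is whether part~(1) really requires the sharper value $m_0/2$ rather than $m_0/2+\tfrac{\min(L)-s}{2}$-type corrections: the clean bound $|(x,x')|\leq m_0/2$ is exactly what comes out because $s\geq m_0+2>m_0$ forces $(x'\mp x,x'\mp x)\geq s\geq m_0$, and substituting $(x',x')=s$ we may even afford $2|(x,x')|\leq (x,x)+s-s=(x,x)=m_0$. I would state this explicitly so that the reader sees no design theory is needed here — the designs enter later in the counting arguments for the $M_j,N_j,M'_j$, not in this pair of inequalities.

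The main obstacle, if any, is purely bookkeeping: one must be sure that $x'-x$ and $x'+x$ genuinely lie in the coset $[x']\in L/\mathcal{L}(L_{m_0})$, which is immediate since $x\in L_{m_0}\subset\mathcal{L}(L_{m_0})$, and that $x'$ was chosen as a representative of minimal norm in that coset, which is the standing hypothesis. Once those are in place the inequalities are one-line expansions of $(x'\pm x,x'\pm x)\geq (x',x')$. I would therefore present the proof as two short symmetric computations and cite \cite{Venkov2,KA} for the original formulation.
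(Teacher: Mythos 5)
Your argument for part (1) is exactly the intended one: the paper gives no proof (it cites Venkov and Kominers--Abel), and the standard proof is precisely the expansion of $(x'\pm x,x'\pm x)\geq (x',x')$, which yields $\mp 2(x,x')\leq (x,x)=m_0$ and hence $|(x,x')|\leq m_0/2$; no design property is used, as you correctly note.

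For part (2), however, there is a genuine gap in your justification. Your final paragraph verifies the coset membership of $x'\pm x$ only via ``$x\in L_{m_0}\subset\mathcal{L}(L_{m_0})$'', but in part (2) the vector $x$ has norm $m_0+2$, and a vector of norm $m_0+2$ need \emph{not} lie in $\mathcal{L}(L_{m_0})$ --- indeed, whether such vectors lie in $\mathcal{L}(L_{m_0})$ is essentially the question the whole theorem is addressing. Under the setup stated just before the lemma ($x'$ of minimal norm in its class of $L/\mathcal{L}(L_{m_0})$), the inequality $(x'\pm x,x'\pm x)\geq (x',x')$ therefore has no justification for $x\in L_{m_0+2}$, and your two-line computation does not go through. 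The repair is to note that part (2) is only invoked in the proofs of Theorem 1.2 (3) and (4), where the class is taken in $L/\mathcal{L}(L_{m_0,m_0+2})$; there $x\in L_{m_0+2}\subset\mathcal{L}(L_{m_0,m_0+2})$, so $x'\pm x$ does lie in $[x']$, minimality applies, and the same expansion gives $\mp 2(x,x')\leq (x,x)=m_0+2$, i.e.\ $|(x,x')|\leq m_0/2+1$. So your proof should state explicitly that part (2) is proved under the hypothesis that $x'$ is a minimal-norm representative modulo a lattice containing $L_{m_0+2}$ (the ambiguity is inherited from the paper's own loose setup, but as written your membership check covers only part (1)).
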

\begin{lem}[\cite{{Venkov2},{KA}}]\label{lem:main} 
Let 
\[
\theta_{L}(q)=\sum_{i=0}^{\infty}a_{i}(L)q^i. 
\]
\begin{enumerate}
\item [{\rm (1)}]We have 
\begin{align*}
a_{m_0}(L) &= 
\sum_{j=0}^{m_0/2}M_j(L;x'),\\
a_{m_0+2}(L) &= 
\sum_{j=0}^{m_0/2+1}N_j(L;x'). 
\end{align*}
\item [{\rm (2)}]
Let $L$ be a lattice such that 
		for $m\in \{m_0,m_0+2\}$, $L_m$ is a spherical $(2t+1)$-design. Then we have that
\begin{align*}
\sum_{j=1}^{m_0/2}
j^{2k}M_j(L; x')
&=a_{m_0}(L)
\frac{1\cdot3\cdots (2k-1)}{n\cdot(n+2)\cdots (n+2k-2)}m_0^k(x',x')^k, \\
\sum_{j=1}^{m_0/2+1}
j^{2k}N_j(L; x')
&=a_{m_0+2}(L)
\frac{1\cdot3\cdots (2k-1)}{n\cdot(n+2)\cdots (n+2k-2)}(m_0+2)^k(x',x')^k, 
\end{align*}
for $k\in\{1,\ldots,t\}$. 
\item [{\rm (3)}] 
	If $L_{m_0}$ is a spherical $2$-design, then 
		$(x',x') \leq \frac{n \cdot m_0}{4} $. 
\end{enumerate}
\end{lem}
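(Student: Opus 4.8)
The plan is to follow the classical Venkov computation: combine the elementary bounds of Lemma~\ref{lem:main-3} with the defining property of a spherical design applied to the shells $L_m$. For part~(1), note first that $L$ being even forces $m_0\in 2\ZZ$, so $m_0/2\in\ZZ$. By Lemma~\ref{lem:main-3}(1) every $x\in L_{m_0}$ satisfies $|(x,x')|\in\{0,1,\dots,m_0/2\}$, so partitioning the finite set $L_{m_0}$ according to the value of $|(x,x')|$ gives $a_{m_0}(L)=|L_{m_0}|=\sum_{j=0}^{m_0/2}M_j(L;x')$; the identical partition of $L_{m_0+2}$, now using Lemma~\ref{lem:main-3}(2), yields the second identity.

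For part~(2), fix $m\in\{m_0,m_0+2\}$ and put $u:=x'/\sqrt{(x',x')}$, a unit vector (legitimate since $(x',x')>m_0>0$). The shell $L_m$ lies on the sphere of radius $\sqrt m$, so by hypothesis $(1/\sqrt m)L_m$ is a spherical $(2t+1)$-design in $S^{n-1}$; applying the defining equality of a spherical design to the polynomial $\xi\mapsto(\xi,u)^{2k}$, of degree $2k\le 2t<2t+1$, gives
\[
\frac{1}{|L_m|}\sum_{x\in L_m}\Bigl(\tfrac{x}{\sqrt m},\,u\Bigr)^{2k}=\frac{1}{|S^{n-1}|}\int_{S^{n-1}}(\xi,u)^{2k}\,d\sigma(\xi)=\frac{1\cdot3\cdots(2k-1)}{n(n+2)\cdots(n+2k-2)},
\]
where the last value is the $2k$-th moment of a single coordinate on $S^{n-1}$, evaluated either by the Beta integral or by expanding $(\xi,u)^{2k}$ into spherical harmonics. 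Since $(x,x')=\sqrt{m\,(x',x')}\,(x/\sqrt m,u)$ and $(x,x')^{2k}=|(x,x')|^{2k}$, multiplying the displayed identity by $|L_m|\,m^k(x',x')^k$, using $|L_m|=a_m(L)$, and grouping the left side by the value $j:=|(x,x')|$ (the $j=0$ term dropping out) turns it into
\[
\sum_{j=1}^{m_0/2}j^{2k}M_j(L;x')=a_{m_0}(L)\,\frac{1\cdot3\cdots(2k-1)}{n(n+2)\cdots(n+2k-2)}\,m_0^k(x',x')^k
\]
when $m=m_0$, and into the analogous identity with $M_j,\,m_0,\,a_{m_0}(L)$ replaced by $N_j,\,m_0+2,\,a_{m_0+2}(L)$ and the sum running to $m_0/2+1$ when $m=m_0+2$.

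For part~(3), run the $k=1$ instance of the argument above; since $(\xi,u)^2$ has degree $2$, only the spherical $2$-design property of $L_{m_0}$ is used, and one gets $\sum_{j=1}^{m_0/2}j^2M_j(L;x')=\frac{m_0(x',x')}{n}\,a_{m_0}(L)$. Bounding $j^2\le(m_0/2)^2$ on the left and using $\sum_{j=1}^{m_0/2}M_j(L;x')\le\sum_{j=0}^{m_0/2}M_j(L;x')=a_{m_0}(L)$ from part~(1) gives $\frac{m_0(x',x')}{n}\,a_{m_0}(L)\le\frac{m_0^2}{4}\,a_{m_0}(L)$, and dividing out the positive quantity $m_0\,a_{m_0}(L)$ yields $(x',x')\le\frac{n\,m_0}{4}$.

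The computation is entirely standard, so I do not expect a genuine obstacle; the only places needing a little care are the evaluation of $\frac{1}{|S^{n-1}|}\int_{S^{n-1}}(\xi,u)^{2k}\,d\sigma(\xi)$, the bookkeeping that matches the required design strength ($2t+1$ when $k\le t$, and merely $2$ in part~(3)) to the degree of the test polynomial $(\xi,u)^{2k}$, and keeping the rescaling between the sphere of radius $\sqrt m$ and the unit sphere straight throughout.
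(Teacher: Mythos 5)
Your proof is correct and takes essentially the same route as the paper: parts (1) and (2) are there only cited from Venkov and Kominers--Abel, and the standard design-moment argument you give (partition by $j=|(x,x')|$ plus the exact evaluation of $\frac{1}{|S^{n-1}|}\int_{S^{n-1}}(\xi,u)^{2k}\,d\sigma(\xi)$) is precisely the argument of those sources. The paper's only in-text proof is of part (3), and it is identical to yours: read part (2) with $k=1$, bound the left-hand side by $(m_0/2)^2a_{m_0}(L)$, and divide by $m_0\,a_{m_0}(L)>0$.
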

Note that part (3) gives a general upper bound on the minimal norm 
in a class of $L/{\mathcal L}(L_{m_0})$ for 2-design lattices. 
In any concrete case (see for example below) we find much better
upper bounds. 

To conclude part (3) we read part (2) for $k=1$ and note that 
the left hand side is $\leq (m_0/2)^2 a_{m_0}(L) $. 
So $a_{m_0}(L) m_0 (x',x') /n \leq a_{m_0}(L) m_0^2 / 4$, whence 
$(x',x') \leq n m_0 /4 $. 
\\

Next, we present a proof of Theorem \ref{thm:main}. 
\begin{proof}[Proof of Theorem \ref{thm:main} (1)]

Let $L$ be an extremal even $2$-modular lattice of rank $32$. 
Let $m_0:=\min(L)=6$ and $t(32):=3$. 

We show 
that  any class 
$[x] \in L/\mathcal{L}(L_{m_0})$, 
$[x]$ is represented by a vector $x'\in [x]$ 
with norm $(x',x')\leq m_0$. 

Suppose that an equivalence class 
$[x'] \in L/\mathcal{L}(L_{m_0})$ exists such that 
$[x']$ is a minimal-norm representative 
with norm $(x',x')= s > m_0$.

	Then  by Lemma \ref{lem:main-3} we have 
	$|(x',v)| \leq  3 = m_0/2 $ for all $v\in L_{6}$. 
For the values $M_i:=M_i(L,x') := |\{ v\in L_6 \mid |(x',v) | = i \} |$ 
	from Lemma \ref{lem:main} we have that $M_i\neq 0$ only 
	if $i=0,1,2,3$ and 
	$$\begin{array}{l} 
		M_0+M_1+M_2+M_3 = a_6(L) = | L_6 | = 261120, \\
		\sum _{v\in L_6} (x',v)^2 = \sum _{i=1}^3 i^2 M_i = 6 s a_6(L)/32,  \\
		\sum _{v\in L_6} (x',v)^4 = \sum _{i=1}^3 i^4 M_i = 6^2 s^2 3a_6(L)/(32\cdot 34),  \\
		\sum _{v\in L_6} (x',v)^6 = \sum _{i=1}^3 i^6 M_i = 6^3 s^3 3\cdot 5 a_6(L)/(32\cdot 34 \cdot 36). 
	\end{array} $$
	Regarding $s$ as a parameter we have 4 equations with 4 unknowns 
	having a unique solution: 
	$$\begin{array}{l} 
		M_0 = -600s^3 + 10080s^2 - 66640s + 261120, \\
		M_1 = 900s^3 - 14040s^2 + 73440s, \\
		M_2 =  -360s^3 + 4320s^2 - 7344s, \\
		M_3 = 60s^3 - 360s^2 + 544s. 
	\end{array} $$
	The polynomial $M_2/s$ is only positive when 
	$s \in [2.05,9.94] $. As $s=(x',x') $ is an even positive integer
	$>m_0 = 6$ 
	we conclude that $s=8$ is the only possible solution. 
	Putting $s=8$ we obtain 
	$$M_0 = 65920,\ M_1=149760, \ 
M_2=33408, \
M_3=12032 .$$

Now we need the harmonic polynomial $P_{8,x'}$ of degree $8$:
\begin{align*}
P_{8,x'}(x)=&(x,x')^8- \frac{7}{11} (x,x')^6(x',x')(x,x)
+ \frac{5}{44}(x,x')^4(x',x')^2(x,x)^2 \\
&- \frac{1}{176}(x,x')^2(x',x')^3(x,x)^3+\frac{1}{26752}(x',x')^4(x,x)^4.   
\end{align*}
By Lemma \ref{lem:main-2} (1),
\[
\sum_{v\in L_{6}}
P_{8,x'}(v) = 
\sum_{w\in L'_{6}}
P_{8,x'}(w). 
\]

	Recall that $L' = \sqrt{2} L^{\sharp }$. 
	For $w\in L^{\sharp} _{3}$ and $x' \in L_8$ we have 
	$$|(w,x')| \in \{ 0 ,1, 2,3 \} $$ 
	because if $(w,x') = 4$ then 
	$x'-2w \in L$ is a vector of norm 4 in $L$ contradicting the
	extremality of $L$. 
	Also $L^{\sharp }_3$ is a spherical $6$-design, so putting
	$M'_j := | \{ w\in L^{\sharp }_3  \mid |(x',w) | = j \} |$ 
	we find with the equations in Lemma \ref{lem:main} that 
	$$M'_0 = 117440 , \  M'_1=126720, \
M'_2= 16704 , \ 
M'_3=   256 . $$
From these numbers we  compute 
$$\sum_{w\in L'_{6}}
P_{8,x'}(w) = -8847360/19 $$
which implies that 
$$\sum _{v\in L_6} (x',v) ^8 = 97320960 = 2^{16} 3^3 5 \cdot 11 .$$
This equation is not satisfied by the $M_i$ above. 
\end{proof}

\begin{rem}
	From our calculations we obtain that for $x'\in L_8$ the 
	unique solution for $M_i := | \{ v\in L_6 \mid |(x',v)| = i \} |$ 
	is 
	$$M_0 = 82720, \ M_1=  122880,\ M_2= 46848,\  M_3= 8192,\  M_4= 480 .$$
\end{rem}

\begin{proof}[Proof of Theorem \ref{thm:main} (2)]

Let $L$ be an extremal even $2$-modular lattice of rank $48$. 
Let $m_0:=\min(L)=8$ and $t(48):=3$. 

We show 
that  any class 
$[x] \in L/\mathcal{L}(L_{m_0})$, 
$[x]$ is represented by a vector $x'\in [x]$ 
with norm $(x',x')\leq m_0$. 

Suppose that an equivalence class 
$[x'] \in L/\mathcal{L}(L_{m_0})$ exists such that 
$[x']$ is a minimal-norm representative 
with norm $(x',x')= s > m_0$. 

	{\bf Claim 1:} {$s\leq 18$} \\
First, we show that $s\leq 18$. 
By Lemma \ref{lem:main-3} we have 
	$|(x',v)| \leq  4 = m_0/2 $ for all $v\in L_{8}$.
For the values  $M_i:=M_i(L,x') := |\{ v\in L_{8} \mid |(x',v) | = i \} |$ 
	from Lemma \ref{lem:main} (2), we have that $M_i\neq 0$ only 
	if $i=0,1,2,3,4$.
	Since $L_8$ is a 6-design we obtain 
	$$\begin{array}{l} 
		M_0+M_1+M_2+M_3+M_4 = a_{8}(L) = | L_{8} | = 9828000, \\
		\sum _{v\in L_{8}} (x',v)^2 = \sum _{i=1}^4 i^2 M_i 
= 8 s a_{8}(L)/48,  \\
		\sum _{v\in L_{8}} (x',v)^4 = \sum _{i=1}^4 i^4 M_i 
= 8^2 s^2 3a_{8}(L)/(48\cdot 50),  \\
		\sum _{v\in L_{8}} (x',v)^6 = \sum _{i=1}^4 i^6 M_i 
= 8^3 s^3 3\cdot 5 a_{8}(L)/(48\cdot 50 \cdot 52). 
	\end{array} $$
Then we have: 
	$$\begin{array}{l} 
M_0 =  -16800s^3 + 305760s^2 - 2229500s + 35M_4 + 9828000, \\
M_1 = 25200s^3 - 425880s^2 + 2457000s - 56M_4, \\
M_2 =  -10080s^3 + 131040s^2 - 245700s + 28M_4, \\
M_3 = 1680s^3 - 10920s^2 + 18200s - 8M_4. 
	\end{array} $$
Since $M_2\geq 0$ and $M_3\geq 0$, we have 
\begin{align*}
360s^3 - 4680s^2 + 8775s
\leq 
M_4
\leq
210s^3 - 1365s^2 + 2275s.
\end{align*}
	As $s\geq 0$ this implies that $-s(s-221/10)\geq 130/3 $ 
	showing that $s\leq 18$.

	{\bf The harmonic polynomials:} \\
Now we need the harmonic polynomials $P_{8,x'}$ of degree $8$ and 
$P_{10,x'}$ of degree $10$:
\begin{align*}
P_{8,x'}(x)=&(x,x')^8- \frac{7}{15} (x,x')^6(x',x')(x,x)
+ \frac{7}{116}(x,x')^4(x',x')^2(x,x)^2 \\
&- \frac{1}{464}(x,x')^2(x',x')^3(x,x)^3+\frac{1}{100224}(x',x')^4(x,x)^4,   \\
P_{10,x'}(x)=&(x,x')^{10}- 
 \frac{45}{64} (x,x')^8(x',x')(x,x)  + \frac{315}{1984}(x,x')^6(x',x')^2(x,x)^2\\
&- \frac{105}{7936}(x,x')^4(x',x')^3(x,x)^3  
+ \frac{315}{902576} (x,x')^2 (x',x')^4(x,x)^4\\
&-\frac{9}{7364608}(x',x')^5(x,x)^5.
\end{align*}
	For the rescaled dual lattice $L' = \sqrt{2} L^{\sharp }$ we obtain
by Lemma \ref{lem:main-2} (1) and (2),
\begin{align}\label{eqn:48d}
\left\{
\begin{array}{l}
\displaystyle
\sum_{v\in L_{8}}
P_{8,x'}(v) = 
\sum_{w\in L'_{8}}
P_{8,x'}(w), \\
\displaystyle
\sum_{v\in L_{8}}
P_{10,x'}(v) = 
-\sum_{w\in L'_{8}}
P_{10,x'}(w). 
\end{array}
\right.
\end{align}

{\bf Claim 2:} {${\mathcal L}(L_8) = {\mathcal L}(L_{8,10,12}) $.}  \\
So assume that $s\leq 12$. 
Then $s=10$ or $s=12$ and 
for $v\in L^{\sharp }_4$ we have
$|(v,x')| \leq 5$. Otherwise there is $v\in L^{\sharp }_4$ with
$(v,x') \geq 6$ yielding a vector $2v-x'\in L$ of norm
$$(2v-x' , 2v-x' ) = 16-24+s = s-8 \leq 4 .$$
	Also $L^{\sharp }_4$ is a spherical $6$-design, so putting
	$M'_j := | \{ w\in L^{\sharp }_4  \mid |(x',w) | = j \} |$ 
a system of $10$ equations 
in the $12$ variables $s,M_j\ (0 \leq j \leq  4),M'_j\ (0 \leq j \leq  5)$ is obtained, 
implying that 
$$ 
f(s) + (32-4s)M'_4+(384-40s)M'_5 = 0 $$  where  
$$f(s) =  6s^5 - 210s^4 + 2681s^3 - 14742s^2+29120s  .$$
For $s=10$ and $s=12$ the coefficients $32-4s$ and $384-40s$ are
negative and so is $f(10)$ and $f(12)$ yielding 
the desired contradiction.

{\bf Claim 3}: {${\mathcal L}(L_8)$ contains all elements $2v$ with $v\in L^{\sharp}_4$.} \\
Let $v\in L^{\sharp }_4$. 
Then $2v \in L$ has norm 16. We show that there is a vector $x\in L_8$ 
such that either $(x,v) = 4$ or $(x,v) = 3$.
In the first case $2v-x \in L_8$ and in the second case $2v-x\in L_{12}$,
so by Claim 2 we obtain $2v\in {\mathcal L}(L_8)$. 
So in the notation above we need 
to show that $M_3'(L,x) \neq 0$ or  $M_4'(L,x)\neq 0$.
Assume that $M_4'(L,x) = 0$.
Then the fact that $L^{\sharp }_4$ is a 6-design 
yields 4 equations 
on the 4 unknowns $M_j'(L,x)$ for $j=0,1,2,3$ having the 
unique solution 
$M_0' = 4726960, M_1' = 4626720, M_2'=  468720, M_3' =    5600$. 
In particular $M_3' > 0$.

{\bf Claim 4}: {${\mathcal L}(L_8) $ contains $L_{14}$, $L_{16}$ and $L_{18}$.} \\ 
So let $x' \in L$ be a minimal representative of $x' + {\mathcal L}(L_8)$ 
of norm $s=(x',x')$. By the above we know that $s=14,16,$ or $18$. 
Also $M_j\neq 0$ only for $j=0,1,2,3,4$ by Lemma \ref{lem:main-3}. 
By Claim 3 we have $(x'-2v,x'-2v) \geq s$ for all $v\in L^{\sharp}_4$, so 
$M'_j \neq 0 $ only for $j=0,1,2,3,4$. 
We have 4 equations for the $M_j$ because $L_8$ is a 6-design, 
4 equations for the $M'_j$ because $L'_8$ is a 6-design, as well 
as the two equations from Lemma \ref{lem:main-2} 
 which admit a 
unique solution in all three cases: 
$$\begin{array}{|c|r|r|r|r|r|} 
	\hline
	s  & M_0 & M_1 & M_2 & M_3 & M_4  \\
	\hline
	14 & 
	12872510/3 & 3361792/3 & 12184144/3 & 50176/3 & 1015378/3 \\
	\hline 
	16 & 7466480 & -4652032 &  7406336 & -1074176 & 681392 \\ 
	\hline 
	18 & 
	13864158 & -78591744/5 & 68553072/5 & -16296192/5 & 6154074/5 \\
	\hline
\end{array} $$
This is clearly a contradiction as all $M_j$ are non-negative integers. 
\end{proof}

\begin{proof}[Proof of Theorem \ref{thm:main} (3)]

Let $L$ be an extremal even $2$-modular lattice of rank $24$. 
Let $m_0:=\min(L)=4$ and $t(24):=1$. 

We show that for any class 
$[x] \in L/\mathcal{L}(L_{m_0,m_0+2})$, 
$[x]$ is represented by a vector $x'\in [x]$ 
with norm $(x',x')\leq m_0+2$. 

Suppose that an equivalence class 
$[x'] \in L/\mathcal{L}(L_{m_0,m_0+2})$ exists such that 
$[x']$ is a minimal-norm representative 
with norm $(x',x')= s > m_0+2=6$. 

By Lemma \ref{lem:main}, 
a system of $2(t(24) + 1)=4$ equations 
in the
variables $s, M_j(L; x' )\ (0 \leq j \leq  2)$ 
and $N_j(L; x' )\ (0 \leq j \leq  3)$ is provided.

Now we need the harmonic polynomials $P_{4,x'}$ of degree $4$ and 
$P_{6,x'}$ of degree $6$:
\begin{align*}
P_{4,x'}(x)=&(x,x')^4- \frac{3}{14} (x,x')^2(x',x')(x,x)
+ \frac{3}{728}(x',x')^2(x,x)^2, \\
P_{6,x'}(x)=&(x,x')^{6}- 
 \frac{15}{32} (x,x')^4(x',x')(x,x)  + \frac{3}{64}(x,x')^2(x',x')^2(x,x)^2\\
&- \frac{1}{1792}(x',x')^3(x,x)^3.
\end{align*}
By Lemma \ref{lem:main-2} (1),
\[
\sum_{x\in L_{6}}
P_{4,x'}(x) = 
-16
\sum_{x\in L_{4}}
P_{4,x'}(x). 
\]
By Lemma \ref{lem:main-2} (2),
\[
\sum_{x\in L_{6}}
P_{6,x'}(x) = 
8
\sum_{x\in L_{4}}
P_{6,x'}(x). 
\]

By Lemmas \ref{lem:main} and \ref{lem:main-2}, 
a system of $6$ equations in the $8$ variables $s,M_j\ (0 \leq j \leq  2),N_j\ (0 \leq j \leq  3)$ is obtained and we have 
\begin{align*}
M_1 &=-336 (-4 + s) s + (2 N_3)/(-2 + s),\\
N_0 &=8064 (32 + (-9 + s) s) + (2 (22 - 5 s) N_3)/(-2 + s). 
\end{align*}
Since $M_1\geq 0$ and $N_0\geq 0$, we have 
\begin{align*}
1344 s - 1008 s^2 + 168 s^3
\leq 
N_3
\leq
\frac{-258048 + 201600 s - 44352 s^2 + 4032 s^3}{-22 + 5 s}. 
\end{align*}
As $s>0$ this implies that $s\leq 8$, in particular 
for $s=8$ we have 
$M_0=2016, 
M_1 = 0, 
M_2 = 1008, 
N_0 = 0, 
N_1= 225792, 
N_2 = 0, 
N_3=32256$.

So $(x',v)$ is even for all $v\in L_4$ and $(x',w)$ is odd for all 
$w\in L_6$. 
This implies that no vector of norm 6 in $L$ is the sum of two vectors
of norm 4. Therefore the inner products of all vectors $v_1,v_2\in L_4$ 
lie in $\{ 0 , \pm 2 , \pm 4 \}$. 
So $\sqrt{2}^{-1} L_4$ is a root system in 24-dimensional space consisting 
of 3024 roots, which is impossible, by the classification of irreducible root 
systems. 

\end{proof}

\begin{proof}[Proof of Theorem \ref{thm:main} (4)]

Let $L$ be an extremal even $2$-modular lattice of rank $36$. 
Let $m_0:=\min(L)=6$ and $t(36):=2$. 

We show that for any class 
$[x] \in L/\mathcal{L}(L_{m_0,m_0+2})$, 
$[x]$ is represented by a vector $x'\in [x]$ 
with norm $(x',x')\leq m_0+2$. 

Suppose that an equivalence class 
$[x'] \in L/\mathcal{L}(L_{m_0,m_0+2})$ exists such that 
$[x']$ is a minimal-norm representative 
with norm $(x',x')= s > m_0+2$.

First, we show that $s\leq 10$. 
By Lemma \ref{lem:main}, 
a system of $2(t(36) + 1)=6$ equations 
in the
variables $s, M_j(L; x' )\ (0 \leq j \leq  3)$ 
and $N_j(L; x' )\ (0 \leq j \leq  4)$ is provided.

Now we need the harmonic polynomials $P_{6,x'}$ of degree $6$ and 
$P_{8,x'}$ of degree $8$:
\begin{align*}
P_{6,x'}(x)=&(x,x')^6- \frac{15}{44} (x,x')^4(x',x')(x,x)
+ \frac{15}{616}(x,x')^2(x',x')^2(x,x)^2\\
&-\frac{1}{4928}(x',x')^3(x,x)^3, \\
P_{8,x'}(x)=&(x,x')^{8}- 
 \frac{7}{12} (x,x')^6(x',x')(x,x)  + \frac{35}{368}(x,x')^4(x',x')^2(x,x)^2\\
&- \frac{35}{8096}(x,x')^2(x',x')^3(x,x)^3+ \frac{5}{194304}(x',x')^4(x,x)^4.
\end{align*}
By Lemma \ref{lem:main-2} (1),
\[
\sum_{x\in L_{8}}
P_{6,x'}(x) = 
-24
\sum_{x\in L_{6}}
P_{6,x'}(x). 
\]
By Lemma \ref{lem:main-2} (2),
\[
\sum_{x\in L_{8}}
P_{8,x'}(x) = 
0. 
\]

By Lemmas \ref{lem:main} and \ref{lem:main-2}, 
a system of $8$ equations in the $10$ variables $s,M_j\ (0 \leq j \leq  3),N_j\ (0 \leq j \leq  4)$ is obtained and we have 
\begin{align*}
M_2 &=-9 s (1425 - 750 s + 86 s^2) + (3 N_4)/(-3 + s),\\
N_3 &=1020 s (19 + 6 (-4 + s) s) + (4 (9 - 2 s) N_4)/(-3 + s). 
\end{align*}
Since $M_2\geq 0$ and $N_3\geq 0$, we have 
\begin{align*}
&-12825 s + 11025 s^2 - 3024 s^3 + 258 s^4\leq \\
&N_3
\leq
(-14535 s + 23205 s^2 - 10710 s^3 + 1530 s^4)/(-9 + 2 s). 
\end{align*}
As $s>0$ this implies that $s\leq 10$.

Finally, we show that $s\leq 8$. 
Let $x' \in L$ be a minimal representative of $x' + {\mathcal L}(L_{6,8})$ 
of norm $s=(x',x')$. 
Assume that $s=10$. 
Also $M_j\neq 0$ only for $j=0,1,2,3$ by Lemma \ref{lem:main-3}. 
For $v\in L^{\sharp}_3$ 
we have $2v\in L_{12}\subset{\mathcal L}(L_{6,8})$ and 
$(x'-2v,x'-2v) \geq s$ for all $v\in L^{\sharp}_3$, so 
$M'_j \neq 0 $ only for $j=0,1,2,3$. 
We have $3$ equations for the $M_j$ because $L_8$ is a $4$-design, 
$3$ equations for the $M'_j$ because $L'_6$ is a $4$-design, as well 
as the two equations from Lemma \ref{lem:main-2} 
which admit a unique solution: 
\[
M'_3=575. 
\]
This is a contradiction since $M'_3\in 2\ZZ$. 
\end{proof}

\begin{rem} 
	Michael J\"urgens used similar methods in his thesis 
	to prove that an extremal 3-modular lattice of dimension 36 
	is generated by its minimal vectors (\cite[Satz 2.6.2]{Jur}). 
	We checked the results of Theorem \ref{thm:main} 
	with a slight modification of J\"urgens' program, by which we also
	found unique solutions for the configuration numbers
	in the case of dimension 24 and 36. 
	\\
	If $L$ is an extremal 2-modular lattice of dimension 24 and 
	$x' \in L \setminus {\mathcal L}(L_4)$ a vector of norm 6, 
	then 
	$$
	\begin{array}{llll} 
		M_0=1116, &  M_1= 1536,  & M_2=372, &  N_0=83052 ,  \\
		N_1=119040,&  N_2= 47646,&   N_3=  7936 ,&  N_4 = 372  \\
		M'_0 =   1602,  & M'_1=   1392, & \mbox{ and } & M'_2 =     30. \\
	\end{array} $$
	If $L$ is an extremal 2-modular lattice of dimension 36 and 
	$x' \in L \setminus {\mathcal L}(L_6)$ a vector of norm 8, 
	then 
	$$
	\begin{array}{lllll} 
		M_0 = 56320 , & M_1 = 77760, & M_2 =  25920, & M_3 =  4160, & 
		 N_0 = 6416070 \\ N_1 = 9953920 , & N_2 = 4439040 , & N_3 = 1051968, &  
		 N_4 = 111760, & N_5 =  4160\\   M'_0 = 77840 , & M'_1 = 78840 , &  M'_2 = 7344 , & \mbox{ and } & M'_3 = 136.  \\
	\end{array} 
	$$
\end{rem}

\section*{Acknowledgments}

The authors would also like to thank the anonymous reviewers 
for their beneficial comments on an earlier version of the manuscript. 
The first named author is supported by JSPS KAKENHI (22K03277).


\end{document}